\documentclass[a4paper,10pt]{amsart}
\usepackage{amsmath,amscd,amssymb,amsthm,amsfonts}
\usepackage{enumerate}
\usepackage{bm}
\usepackage{tensor}
\usepackage[dvips]{graphicx}
\usepackage[numbers]{natbib}
\usepackage[all]{xy}

\newtheorem{theorem}{Theorem}[section]

\newtheorem{corollary}[theorem]{Corollary}

\theoremstyle{definition}
\newtheorem{definition}[theorem]{Definition}

\theoremstyle{remark}
\newtheorem{remark}[theorem]{Remark}

\numberwithin{equation}{section}



\def\cF{{\mathcal F}}




\def\Flin{\cF{^\ell}}

\def\rv{\mathbf{v}}
\def\rh{\mathbf{h}}



\def\to{\rightarrow}

\def\to{\rightarrow}



\def\r#1{|_{#1}}




\usepackage{xcolor}
\definecolor{red}{rgb}{0.8,0,0.1}

\definecolor{blue}{rgb}{0.1,0.1,0.8}

\definecolor{green}{rgb}{0.01,0.8,0.32}

\title[]{The Closure of a Linear Foliations}

\author[de Melo]{Mateus de Melo}
\author[Struchiner]{Ivan Struchiner}

\address{M. de Melo, \hfill\break\indent 
Universidade Federal do Espírito Santo, Departamento de Matem\'{a}tica, \hfill\break\indent 
Avenida Fernando Ferrari 514, 29075-910, Vitória, Brazil.}
\email{\textbf{(de Melo)  mateus.melo@ufes.br}  }

\address{I. Struchiner, \hfill\break\indent 
Universidade de S\~{a}o Paulo, Instituto de Matem\'{a}tica e Estat\'{i}stica, \hfill\break\indent 
Rua do Mat\~{a}o 1010, 05508-090 S\~{a}o Paulo, Brazil.}
\email{\textbf{ivanstru@ime.usp.br}}

\subjclass[2000]{Primary 53C12, Secondary 57R30}

\keywords{Singular Riemannian foliation, Riemannian groupoids, Lie groupoids, Molino's conjecture}

\thanks{
The first author was supported by grant $\#$ 2019/14777-3, São Paulo Research Foundation (FAPESP).
The second author was supported by grant $\#$ 2022/16310-8, São Paulo Research Foundation (FAPESP).
}

\begin{document}

\begin{abstract}
This paper presents a simplified geometric proof of the Molino-Alexandrino-Radeschi (MAR) Theorem, which states that the closure of a singular Riemannian foliation on a complete Riemannian manifold is itself a smooth singular Riemannian foliation. Our approach circumvents several technical and analytical tools employed in the previous proof of the Theorem, resulting in a more direct geometric demonstration. We first establish conditions for a projectable foliation to be Riemannian, focusing on compatible connections. We then apply these results to linear foliations on vector bundles and their lifts to frame bundles. Finally, we use these findings to the linearization of singular Riemannian foliations around leaf closures. This method allows us to prove the smoothness of the closure directly for the linear semi-local model, bypassing the need for intermediate results on orbit-like foliations. 
\end{abstract}

\maketitle

\section{Introduction}

In this paper, we present a simplified geometric proof of the Molino-Alexandrino-Radeschi (MAR) Theorem. This theorem states that the foliation obtained by taking closures of the leaves of a singular Riemannian foliation on a complete Riemannian manifold is itself a smooth singular Riemannian foliation. Our approach circumvents several technical and analytical tools employed in the proof of the Molino conjecture given by Alexandrino and Radeschi \cite{ar2,ar1}, resulting in a more direct geometric demonstration.

Recall that a foliation $\mathcal{F}$ on a manifold $M$ is a \emph{smooth} decomposition of $M$ into a disjoint union of connected, immersed submanifolds called leaves. The smoothness condition of a foliation is expressed as the property that any vector tangent to a leaf at a point extends locally to a smooth vector field on $M$ which is still tangent to the leaves of the foliation at every point. A foliation is termed regular if all leaves have the same dimension, and singular otherwise. When one considers a Riemannian metric on the underlying manifold $M$, a particularly important class of foliations is that of Riemannian foliations. A foliation is called Riemannian if every geodesic that meets $\mathcal{F}$ orthogonally at a point remains orthogonal to $\mathcal{F}$ for all time.

The Molino conjecture has a rich history in the study of foliations. In \cite{molino77}, Molino demonstrated that the foliation obtained by taking the closures of the leaves of a transversely complete foliation is still a smooth foliation. By lifting a regular Riemannian foliation on a complete Riemannian manifold to the orthonormal frame bundle of the normal bundle of the foliation, one obtains a transversely complete foliation. Therefore, the smoothness of closures for Riemannian foliations immediately follows.

Molino's work in \cite{molino88} further revealed that singular Riemannian foliations admit stratifications based on leaf dimension. He observed that leaf closures remain within the same stratum and, noting that the restriction to each stratum constitutes a regular Riemannian foliation, concluded that each leaf closure forms a smooth manifold. While the orthogonality property of geodesics required for a Riemannian foliation (transnormal system) followed automatically, Molino conjectured that these closures form a smooth singular Riemannian foliation.

Molino's conjecture was proved in several particular cases, the most significant of which turned out to be the case of orbit-like foliations, demonstrated by Alexandrino and Radeschi in \cite{ar1}. Their approach employed analytic methods to establish the smoothness of isometric flows on the quotient space of isometric actions. Leveraging the fact that orbit-like foliations behave transversally as isometric actions, 
they applied their result to lift isometric flows on the leaf space of orbit-like foliations to smooth flows. This allowed them to conclude that the leaf closures of such foliations form a smooth foliation. 

The importance of the orbit-like foliation case was underscored by Alexandrino and Radeschi in \cite{ar2}, where they showed that the general case of Molino's conjecture for singular Riemannian foliations could be reduced to this specific scenario. Indeed, they demonstrated that to prove Molino's conjecture, it was sufficient to show it for either of two foliations constructed from the original one: the linearized foliation $\Flin$ on the normal bundle $\nu B$, where $B = \overline{L}$ is a leaf closure,  or the ``local closure'' foliation, an orbit-like foliation containing the linearized foliation with the same closure. These foliations were shown to be Riemannian with respect to a ``linear'' metric on $\nu B$, invariant under homotheties, constructed by Alexandrino and Radeschi. This metric was obtained through a series of steps involving the exponential map, linearization along fibers, and the construction of an affine connection \cite[Sec. 5]{ar2}.
Our approach is to use the results of the geometric paper \cite{ar2} to prove directly the smoothness of the closure of the linearized foliation, bypassing the orbit-like construction and the intermediary result in \cite{ar1}. This method not only simplifies the proof but also provides new geometric insights into the structure of singular Riemannian foliations.

More precisely, we consider the lift of the linearized foliation to the orthonormal frame bundle of the normal bundle of the leaf closure $B = \bar{L}$, obtaining a foliation $\widehat{\mathcal{F}}$ on $O(\nu B)$. This is the foliation that was obtained in \cite{aims} where it was shown to be regular. We are thus in the following situation: we have a regular foliation $\widehat{\mathcal{F}}$ which projects to a regular Riemannian foliation $\mathcal{F}_B$ on $B$. It is then natural to ask for the existence of a Riemannian metric on $O(\nu B)$ making $\widehat{\mathcal{F}}$ into a Riemannian foliation. This is the content of our main theorem which then implies the alternative proof of the MAR Theorem. 

We take the following steps:
\begin{itemize}
\item  In Theorem \ref{thm:riem-fol}, we consider a surjective submersion $\pi: P \to B$ and a regular foliation $\mathcal{F}$ on $P$ that is $\pi$-projectable to a regular Riemannian foliation $\mathcal{F}_B$ on $B$. We establish sufficient conditions for $\mathcal{F}$ to be a Riemannian foliation.
\item In Corollary \ref{cor:principal-bundle} we apply the previous result to the special case where $\pi: P \to B$ is a principal bundle.
\item In Section \ref{sec:linear-foliations} we discuss the process of lifting linear foliations on vector bundles to their frame bundles. We investigate connections compatible with the foliation and we prove, in Theorem \ref{thm:closure-lin-fol}, that under certain regularity conditions, the existence of such connections implies the smoothness of the foliation obtained by taking closures of the leaves of the linear foliation.
\item Finally, in Section \ref{sec:proof-MAR} we complete the alternative proof of the MAR Theorem by pointing to the relevant results in the literature which show that our results apply to this case. 
\end{itemize}

\subsection*{Acknowledgments}
We thank M. Alexandrino for fruitful discussions and comments. M. de Melo thanks the Differential Geometry Research Group of IME-USP for the support during visits to the institute.

\section{Riemannian Foliations on Bundles and Compatible Connections}

This section establishes the conditions under which a projectable foliation becomes Riemannian. We begin by introducing key definitions and then present our main theorem, which characterizes Riemannian foliations in terms of compatible connections.

\begin{definition}[Projectable Foliation]
Let $\pi: P \to B$ be a submersion. A foliation $\mathcal{F}$ on $P$ is called \emph{$\pi$-projectable} if its projection $\mathcal{F}_B$ defines a (smooth!) foliation on $B$.
\end{definition}

\begin{definition}[Compatible Ehresmann Connection]
Let $\mathcal{F}$ be a regular $\pi$-projectable foliation whose projection is also a regular foliation. An Ehresmann connection $\sigma: \pi^{*}TB \to TP$ is compatible with $\mathcal{F}$ if $\sigma(\pi^*T\mathcal{F}_B) \subset T\mathcal{F}$. This compatibility is represented by the following commutative diagram:
$$
  \xymatrix{
  0 \ar[r] & \ker d\pi \ar[r] & TP \ar[r]  & \pi^*TB \ar[r] \ar@/^/[l]^{\sigma} & 0 \\ 
  0 \ar[r] & T\mathcal{F}^{\rv} \ar[r]\ar[u] & T\mathcal{F} \ar[r]\ar[u]  & \pi^*T\mathcal{F}_B \ar[r]\ar[u] & 0.                                                                        
}       
$$
\end{definition}

Let $\mathcal{H}$ denote the image of $\sigma$ and $\mathcal{V}$ the kernel of $d\pi$. Then $TP = \mathcal{H} \oplus \mathcal{V}$. We may refer to $\mathcal{H}$ as a connection, as it uniquely determines $\sigma$ and vice versa.

\begin{remark}
It is easy to see that compatible Ehresmann connections always exist. For example, one can first choose a splitting $\sigma_{\mathcal{F}}: \pi^*T\mathcal{F}_B \to T\mathcal{F}$,  
then complementary subbundles $\mathcal{C}$ and $\mathcal{C}_B$ for $T\mathcal{F}$ and $T\mathcal{F}_B$, respectively, and lastly a splitting $\sigma_{\mathcal{C}}: \pi^*\mathcal{C}_B \to \mathcal{C}$.
\end{remark}

\begin{remark}[Induced Metric]
Given an Ehresmann connection $\mathcal{H}$ for $\pi: M \to B$, a metric $\eta_B$ on $B$, and a metric $\eta^{\rv}$ on $\mathcal{V}$, there is a unique metric $\eta$ on $M$ by lifting $\eta_B$ to $\mathcal{H}$ and declaring $\mathcal{V}$ orthogonal to $\mathcal{H}$.
Furthermore, $\eta$ is complete if and only if $\mathcal{H}$, $\eta_B$ and the restrictions of $\eta^{\rv}$ to the fibers are complete.
\end{remark}

{
Under the assumption that both $\mathcal{F}$ and $\mathcal{F}_B$ are regular foliations, a compatible connection and an induced metric decompose $TP$ as:

\[
TP = T\mathcal{F} \oplus T\mathcal{F}^{\perp} = \mathcal{T}^{\rh} \oplus \mathcal{T}^{\rv} \oplus \mathcal{N}^{\rh} \oplus \mathcal{N}^{\rv},
\]
where $\mathcal{T}^{\rh} = T\mathcal{F} \cap \mathcal{H}$, 
$\mathcal{T}^{\rv} = T\mathcal{F} \cap \mathcal{V}$, 
$\mathcal{N}^{\rh} = T\mathcal{F}^{\perp} \cap \mathcal{H}$, 
and $\mathcal{N}^{\rv} = T\mathcal{F}^{\perp} \cap \mathcal{V}$.
Observe that $\mathcal{T}^{\rv}$ is the tangent distribution of the foliation obtained by restricting $\mathcal{F}$ to the $\pi$-fibers.

\begin{definition}[$\mathcal{F}$-foliated Connection]
A compatible connection $\mathcal{H}$ is called an $\mathcal{F}$-foliated connection if for every point $x \in M$ and every vector $w \in \mathcal{N}_x^{\rh}$, there exists an extension of $w$ to an $\mathcal{F}$-foliated vector field, i.e., a vector field $W$ such that $[W, \Gamma(\cF)] \in \Gamma(\cF)$.
\end{definition}

We now state our main theorem characterizing Riemannian foliations:

\begin{theorem}\label{thm:riem-fol}
Let $\pi:P \to B$ be a submersion with $\mathcal{F}$ a regular $\pi$-projectable foliation whose projection is a regular foliation $\mathcal{F}_B$ in $B$.
Suppose $\mathcal{H}$ is a connection compatible with $\mathcal{F}$, $\eta_B$ is a metric on $B$, and $\eta^{\rv}$ is a metric on $\mathcal{V}$, all these inducing a metric $\eta$ on $P$.
Then $\mathcal{F}$ is a Riemannian foliation with respect to $\eta$ if and only if:
\begin{enumerate}[i)]
    \item $(B,\mathcal{F}_B,\eta_B)$ is a Riemannian foliation on the base;
    \item $\mathcal{H}$ is an $\mathcal{F}$-foliated connection;
    \item $\eta^{\rv}$ is preserved by the partial connection $\mathcal{H}\r{\mathcal{F}_B}$;
    \item $(\pi^{-1}(b),\mathcal{F}\r{\pi^{-1}(b)}, \eta^{\rv}\r{\pi^{-1}(b)})$ is a Riemannian foliation for every $b$ in $B$.
\end{enumerate}
Moreover, if $\mathcal{H}$, $\eta_B$ and the restrictions of $\eta^{\rv}$ to the fibers are complete, then the leaf closure foliation $\overline{\mathcal{F}}$ is smooth.
\end{theorem}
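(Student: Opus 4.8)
The plan is to reduce the statement ``$\mathcal{F}$ is Riemannian'' to the condition that $\eta$ is bundle-like, i.e.\ that the transverse metric $\eta^{\perp}$ induced by $\eta$ on the normal bundle $\mathcal{N}=T\mathcal{F}^{\perp}=\mathcal{N}^{\rh}\oplus\mathcal{N}^{\rv}$ is holonomy invariant; equivalently, that the Bott connection $\nabla^{B}_{X}s=\mathrm{pr}_{\mathcal{N}}[X,s]$ (with $X\in\Gamma(T\mathcal{F})$, $s\in\Gamma(\mathcal{N})$) is metric for $\eta^{\perp}$. First I would record two structural facts. Because $\mathcal{H}\perp\mathcal{V}$ and $\mathcal{N}^{\rh}\subset\mathcal{H}$, $\mathcal{N}^{\rv}\subset\mathcal{V}$, the transverse metric is block-diagonal, with the $\mathcal{N}^{\rh}$-block descending from $\eta_{B}$ (on horizontal lifts $\eta$ coincides with the pullback of $\eta_{B}$, so such transverse lengths are leaf-basic) and the $\mathcal{N}^{\rv}$-block equal to $\eta^{\rv}\r{\mathcal{N}^{\rv}}$. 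Since ``$\nabla^{B}$ metric'' is tensorial in all three arguments, it suffices to test it on frames adapted to $T\mathcal{F}=\mathcal{T}^{\rh}\oplus\mathcal{T}^{\rv}$ and $\mathcal{N}=\mathcal{N}^{\rh}\oplus\mathcal{N}^{\rv}$, using horizontal lifts for the horizontal pieces; the bracket computations are then governed by $\pi$-relatedness, the key instance being that $[\,\text{horizontal lift},\,\text{vertical}\,]$ is again vertical because $d\pi$ annihilates it.

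With this in place the verification splits into blocks, each matching one of the four conditions. For $X$ horizontal-leafwise (a horizontal lift of a leafwise field $\bar X$ on $B$) and $s,s'$ horizontal-normal, the whole identity pushes forward under $\pi$ to the Bott-metric identity of $(\mathcal{F}_{B},\eta_{B})$, which is condition (i). For $X$ horizontal-leafwise and $s,s'$ vertical-normal, the bracket $[X,s]$ is exactly the derivative of the partial connection $\mathcal{H}\r{\mathcal{F}_{B}}$, so the block says precisely that this partial connection preserves $\eta^{\rv}$, which is condition (iii). For $X$ vertical-leafwise and $s,s'$ vertical-normal everything takes place inside a single fibre $\pi^{-1}(b)$, whose foliation has tangent $\mathcal{T}^{\rv}$, normal bundle $\mathcal{N}^{\rv}$ and induced metric $\eta^{\rv}\r{\pi^{-1}(b)}$, and the block is the bundle-like condition there, which is condition (iv). Finally the block with $X$ vertical-leafwise and $s,s'$ horizontal-normal vanishes automatically, since the transverse lengths are leaf-basic and $[X,s]$ is vertical.

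The delicate point, and the one I expect to be the main obstacle, is the off-diagonal $(\mathcal{N}^{\rh},\mathcal{N}^{\rv})$-blocks. There $\eta^{\perp}(s,s')=0$, so one must show $\eta^{\perp}(\nabla^{B}_{X}s,s')+\eta^{\perp}(s,\nabla^{B}_{X}s')=0$ for all leafwise $X$; after noting that the relevant vertical bracket contributes nothing, this collapses to the requirement $\mathrm{pr}_{\mathcal{N}^{\rv}}[X,e]=0$ for every leafwise $X$ and every horizontal-normal $e$, i.e.\ the leaves must not shear the horizontal normal directions into the vertical normal ones. This is exactly where hypothesis (ii) is essential: being an $\mathcal{F}$-foliated connection lets one choose the horizontal-normal representatives $e$ to be simultaneously horizontal (so that the reductions to (i) and to basicness survive) and $\mathcal{F}$-foliated (so that $[X,e]\in\Gamma(T\mathcal{F})$ and $\nabla^{B}_{X}e=0$, killing the cross-term). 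Reconciling these two demands on the representatives is the heart of the argument and the reason an arbitrary compatible connection will not do. The necessity direction then follows by reading each block backwards: restricting a bundle-like $\eta$ to a fibre forces (iv), testing against horizontal-leafwise fields forces (i) and (iii), and (ii) records the regularity that makes the reduction possible.

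It remains to prove the final clause. By the equivalence just established, $\mathcal{F}$ is a regular Riemannian foliation on $(P,\eta)$; and by the Induced Metric remark the completeness hypotheses on $\mathcal{H}$, $\eta_{B}$ and the fibre restrictions of $\eta^{\rv}$ are exactly equivalent to completeness of $\eta$. Thus $(P,\eta,\mathcal{F})$ is a complete regular Riemannian foliation, and I would invoke Molino's classical structure theorem \cite{molino88} for such foliations, by which the leaf closures are the leaves of a smooth singular Riemannian foliation. Applied here this yields the smoothness of $\overline{\mathcal{F}}$ and completes the proof.
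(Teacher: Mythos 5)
Your proposal follows essentially the same route as the paper: the same four-fold splitting $TP=\mathcal{T}^{\rh}\oplus\mathcal{T}^{\rv}\oplus\mathcal{N}^{\rh}\oplus\mathcal{N}^{\rv}$, the same matching of the diagonal blocks to conditions (i), (iii), (iv) (your Bott-connection metricity condition is the paper's criterion $L_U\eta^{\perp}=0$ in equivalent language, and your ``vanishes automatically'' block is the paper's case 2 specialized to $\pi_*U=0$), the same identification of the cross-block $(\mathcal{N}^{\rh},\mathcal{N}^{\rv})$ with condition (ii), and the same endgame for the final clause via the completeness statement of the Induced Metric remark together with Molino's theorem for regular Riemannian foliations on complete manifolds. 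However, there are two places where your write-up has a genuine gap.

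First, in the sufficiency of (ii) you assert that ``being an $\mathcal{F}$-foliated connection lets one choose the horizontal-normal representatives $e$ to be simultaneously horizontal and $\mathcal{F}$-foliated,'' and you correctly identify this as what closes your cross-block computation (it kills both $X\eta^{\perp}(e,s')$ and $\nabla^{B}_{X}e$ at once). But the paper's definition of an $\mathcal{F}$-foliated connection guarantees only that each $w\in\mathcal{N}^{\rh}_x$ admits \emph{some} $\mathcal{F}$-foliated extension $W$, with no control on the position of $W$ relative to $\mathcal{H}$ or to $T\mathcal{F}^{\perp}$; you never construct the simultaneous representative, and it does not follow formally from the definition. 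The paper's case 3 instead computes $(L_U\eta^{\perp})(X,Y)=-\eta([U,Y],X)$ for projectable $X\in\Gamma(\mathcal{N}^{\rv})$, $Y\in\Gamma(\mathcal{N}^{\rh})$, and then evaluates the tensor $L_U\eta^{\perp}$ at the point with the foliated extension inserted in one slot only; tensoriality of $L_U\eta^{\perp}$ is what allows different extensions to be used in different blocks, so no single extension has to do both jobs. (Your instinct that some positional control on the foliated extension is needed to handle the derivative term coming from its $\mathcal{N}^{\rv}$-part is a fair observation, and the paper itself is terse on this point; but as written you replace that terse step by an unproven existence claim.)

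Second, your necessity direction for (ii) is a hand-wave where the paper has an actual construction. ``Reading the block backwards'' yields only the pointwise identity $\eta([U,Y],X)=0$, i.e.\ $\mathrm{pr}_{\mathcal{N}^{\rv}}[U,Y]=0$ for suitable $Y$; condition (ii), however, asserts the \emph{existence of foliated extensions}, which must be produced. The paper does this in two steps: since condition (i) is already established, $\mathcal{F}_B$ is Riemannian, so the projection $\pi_*w$ extends locally to a field $W'$ on $B$ commuting with all fields tangent to $\mathcal{F}_B$; its horizontal lift $W$ then has $[U,W]$ vertical for every projectable leafwise $U$, and the vanishing of the cross-block expression forces $\mathrm{pr}_{\mathcal{N}^{\rv}}[U,W]=0$, hence $[U,W]\in\Gamma(\mathcal{T}^{\rv})\subset\Gamma(T\mathcal{F})$, i.e.\ $W$ is foliated. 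Note that this construction also produces exactly the horizontal-and-foliated representatives your sufficiency argument wanted, so the lemma missing from your first gap is the same construction you skipped in the second. Your treatment of the blocks corresponding to (i), (iii), (iv) and of the completeness/Molino clause is correct and matches the paper.
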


\begin{proof}
Recall that a regular foliation $\mathcal{F}$ is Riemannian with respect to $\eta$ if the restriction $\eta^{\perp}$ to $T\mathcal{F}^{\perp}$ is invariant along $\mathcal{F}$, i.e., $L_U\eta^{\perp}=0$ for all $U$ in $\mathfrak{X}(\mathcal{F})$.

Therefore, we will prove the theorem by examining the Lie derivative of $\eta^{\perp}$ along vector fields tangent to $\mathcal{F}$. Due to the $C^{\infty}(M)$-linearity of the Lie derivative of  $\eta^{\perp}$ along $\mathcal{F}$, it suffices to consider $\pi$-projectable vector fields $U$ tangent to $\mathcal{F}$.

Let $X$ and $Y$ be $\pi$-projectable vector fields in $T\mathcal{F}^{\perp}$. We consider three cases:

\noindent \textbf{1) $X, Y$ are tangent to $\mathcal{N}^{\rv}$:}

\ \
 \begin{enumerate}[a)]
 \item  If $U$ is tangent to $\mathcal{T}^{\rh}$:
      \begin{equation*}
         L_U\eta^{\perp}(X,Y) = L_U\eta(X,Y) = L_U\eta^{\rv}(X,Y).
      \end{equation*}
      This implies that $\eta^{\rv}$ must be preserved by the $\mathcal{F}_B$-partial connection induced by $\mathcal{H}$, which is condition (iii).
\\

 \item If $U$ is tangent to $\mathcal{T}^{\rv}$, we can restrict all involved vector fields to the fiber $\pi^{-1}(b)$, where $\pi(x)=b$. Then, $L_U\eta^{\perp}(X,Y)=0$ for all $U$ in $\mathcal{T}^{\rv}$ and $X,Y$ in $\mathcal{N}^{\rv}$ if and only if $(\pi^{-1}(b),\mathcal{F}\r{\pi^{-1}(b)}, \eta^{\rv}\r{\pi^{-1}(b)})$ is a Riemannian foliation for all $b\in B$, which is condition (iv).
 \end{enumerate}

\noindent \textbf{2) $X, Y$ are vector fields in $\mathcal{N}^{\rh}$:}

   \begin{align*}
      L_U\eta^{\perp}(X,Y) &= L_U\left(\eta(X,Y)\right)-\eta([U,X],Y) - \eta([U,Y],X) \\
      &= L_U \left(\pi^*\eta_B(X,Y)\right) - \pi^*\eta_B([U,X],Y) - \pi^*\eta_B([U,Y],X) \\
      &= L_{\pi_*U}\left(\eta_B(\pi_*X,\pi_*Y)\right) - \eta_B([\pi_*U,\pi_*X],\pi_*Y) - \eta_B([\pi_*U,\pi_*Y],\pi_*X) \\
      &= L_{\pi_*U}\eta_B^{\perp}(\pi_*X,\pi_*Y)
   \end{align*}
   This vanishes for all $U\in T\mathcal{F}$ and $X,Y$ in $\mathcal{N}^{\rh}$ if and only if $(B,\mathcal{F}_B,\eta_B)$ is a Riemannian foliation, which is condition (i).
\\

\noindent \textbf{3) $X$ is in $\mathcal{N}^{\rv}$ and $Y$ is in $\mathcal{N}^{\rh}$:}
   \begin{align*}
      \left(L_U\eta^{\perp}\right)(X,Y) &= L_U\left(\eta(X^\perp,Y^\perp)\right) -\eta([U,X]^\perp,Y^\perp) - \eta([U,Y],X) \\
      &= -\eta([U,Y],X)
   \end{align*}
   This vanishes pointwise if for all $w$ in $\mathcal{N}_x^{\rh}$ there is an extension to an $\mathcal{F}$-foliated vector field. This is precisely the definition of an $\mathcal{F}$-foliated connection, which is condition (ii). Conversely, if $\mathcal{F}$ is Riemannian, then $\mathcal{F}_B$ is also Riemannian. Hence, we can locally extend the projection of $w$ in $\mathcal{N}_x^{\rh}$ to a vector field $W'$ such that $[W',U'] = 0$ for all vector fields $U'$ tangent to $\mathcal{F}_B$. The vanishing of the above expression then implies that the lift $W$ to $M$ is foliated.

Thus, $\mathcal{F}$ is a Riemannian foliation with respect to $\eta$ if and only if all four conditions are satisfied.
\end{proof}

For principal bundles with invariant foliations, we have the following simplified result:

\begin{corollary}\label{cor:principal-bundle}
Let $\pi:P \to B$ be a $G$-principal bundle with $\mathcal{F}$ a $G$-invariant regular foliation whose projection is a regular foliation $\mathcal{F}_B$ on $B$.
Suppose $\mathcal{H}$ is a $G$-connection compatible with $\mathcal{F}$, $\eta_B$ is a metric on $B$, and $\langle\,,\rangle_G$ is a left-invariant metric on $G$, all these inducing a metric $\eta$ on $P$.
Then $\mathcal{F}$ is a Riemannian foliation with respect to $\eta$ if and only if:
\begin{enumerate}[i)]
    \item $(B,\mathcal{F}_B,\eta_B)$ is a Riemannian foliation on the base;
    \item $\mathcal{H}$ is an $\mathcal{F}$-foliated connection.
\end{enumerate}
Moreover, if $\eta_B$ is complete then the leafwise closure foliation  $\overline{\mathcal{F}}$ is a $G$-invariant smooth foliation. 
\end{corollary}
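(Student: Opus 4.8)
The plan is to deduce this statement from Theorem~\ref{thm:riem-fol} by specializing to the principal bundle and checking that the $G$-invariance hypotheses force conditions (iii) and (iv) of that theorem to hold automatically, so that only (i) and (ii) survive. First I would choose a point in each fiber to identify $\pi^{-1}(b)\cong G$ and recall that the vertical bundle $\mathcal{V}$ is trivialized by the fundamental vector fields $\xi_P$, $\xi\in\mathfrak{g}$, of the principal action; under this identification a fundamental field corresponds to the left-invariant field $\xi^L$ on $G$. Consequently the vertical metric $\eta^{\rv}$ induced by $\langle\,,\rangle_G$ takes the constant value $\langle\xi,\zeta\rangle_e$ on a pair of fundamental fields $\xi_P,\zeta_P$, and restricts on every fiber to (a copy of) the left-invariant metric $\langle\,,\rangle_G$ itself. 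In particular the fiber restrictions of $\eta^{\rv}$ are left-invariant metrics on $G$.

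Next I would verify condition (iv). Since $\mathcal{F}$ is $G$-invariant, its restriction to a fiber is invariant under the principal (right) action and is therefore a foliation of $G$ by the cosets $Kg$ of the connected subgroup $K$ whose Lie algebra $\mathfrak{k}\subset\mathfrak{g}$ corresponds to $\mathcal{T}^{\rv}$ at the chosen point. The leaf-tangent vector fields are then generated over $C^\infty(G)$ by the right-invariant fields $\xi^R$ with $\xi\in\mathfrak{k}$, and each such $\xi^R$ generates the flow of left translations $L_{\exp(t\xi)}$. These are isometries of the left-invariant metric and, because $\exp(t\xi)\in K$, they preserve every leaf. Invoking the $C^\infty$-linearity of the transverse Lie derivative along a foliation (already used in the proof of Theorem~\ref{thm:riem-fol}), it suffices to test on the generators $\xi^R$, and for these the fact that they are Killing fields preserving the foliation yields $L_{\xi^R}\eta^{\perp}=0$; hence the fiber foliation is Riemannian. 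For condition (iii) I would use that the parallel transport of the principal connection $\mathcal{H}$ is $G$-equivariant, so it carries fundamental fields to fundamental fields, $dP_\gamma(\xi_P)=\xi_P$; since $\eta^{\rv}$ is constant on fundamental fields this parallel transport is a fiber isometry, and in particular $\eta^{\rv}$ is preserved by the partial connection $\mathcal{H}\r{\mathcal{F}_B}$.

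With (iii) and (iv) automatic, Theorem~\ref{thm:riem-fol} gives the stated equivalence with (i) and (ii). For the final assertion I would check the three completeness hypotheses of the ``moreover'' part of that theorem: $\eta_B$ is complete by assumption; the fiber restrictions of $\eta^{\rv}$ are left-invariant metrics, which are always complete; and a principal connection is a complete Ehresmann connection, since horizontal lifts along curves in $B$ solve a time-dependent right-invariant equation on $G$ and hence exist for all time. Applying the theorem yields the smoothness of $\overline{\mathcal{F}}$. Finally, $G$-invariance of $\overline{\mathcal{F}}$ follows because the principal action is by diffeomorphisms permuting the leaves of $\mathcal{F}$ and therefore permuting their closures.

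The step I expect to be the main obstacle is condition (iv). The principal action is by right translations, which are not isometries of a left-invariant metric, so one cannot argue directly that the fiber foliation is homogeneous under an isometric action. The decisive point is the switch of sides---that the directions tangent to the fiber foliation are spanned by right-invariant fields, whose flows are the left translations and hence the isometries---combined with the tensoriality of the transverse Lie derivative, which reduces the verification to these generators.
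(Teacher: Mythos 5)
Your proposal is correct and takes essentially the same route as the paper's proof: specialize Theorem \ref{thm:riem-fol}, note that condition (iii) holds because parallel transport of a principal connection is $G$-equivariant and so acts by left translations on the fibers, that condition (iv) holds because the fiber foliation is the foliation of $G$ by cosets $Kg$ of a connected subgroup, which is Riemannian for any left-invariant metric, and that completeness of left-invariant metrics and principal connections reduces the completeness of $\eta$ to that of $\eta_B$. The only difference is granularity: you supply the right-invariant-generator argument (the flows $L_{\exp(t\xi)}$, $\xi\in\mathfrak{k}$, are isometries preserving each coset) that the paper merely asserts, and your cosets $Kg$ are what the paper loosely calls ``left cosets.''
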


\begin{proof}
Condition (iii) of Theorem \ref{thm:riem-fol} is automatically satisfied because the holonomy of a principal connection acts as left translations in the fibers. Condition (iv) is satisfied because the restriction of a $G$-invariant foliation to a fiber is identified with the foliation of $G$ by left cosets of a connected subgroup, which is Riemannian with respect to any left-invariant metric. 
Since left-invariant metrics and principal connections are always complete, the completeness of $\eta$ is characterized by the completeness of $\eta_B$. Hence, if $\eta_B$ is complete, then $\mathcal{F}$ is a regular Riemannian foliation on a complete Riemannian manifold, and by Molino's theory, $\overline{\mathcal{F}}$ is a smooth foliation.
\end{proof}

\section{Linear Foliations} \label{sec:linear-foliations}

This section explores the relationship between linear foliations on vector bundles and their lifts to frame bundles. We introduce key definitions and present a theorem characterizing conditions under which the closure of a linear foliation is smooth.

Recall that a vector field $X$ on a vector bundle $E \to B$ is called linear if it is invariant under scalar multiplication (homotheties) of $E$. Equivalently, $X$ is projectable to $B$, and its flow, whenever defined, is a linear isomorphism between the fibers of $E$. Recall also that an affine connection on $E$ is an Ehresmann connection $\mathcal{H} \subset TE$ for the submersion $E \to B$ that is generated by linear vector fields, or equivalently, such that $\mathcal{H}$ is invariant under the differential of the scalar multiplication on $E$.

Let us denote by $\pi: \mathrm{Fr}(E) \to B$ the frame bundle of $E$. It is a principal $\mathrm{GL}(q)$-bundle whose fibers consist of all linear isomorphisms $\mathbb{R}^q \to E_b = \pi^{-1}(b)$, where $q$ is the rank of $E$. There exists a Lie algebra isomorphism between linear vector fields on $E$ and $\mathrm{GL}(q)$-invariant vector fields on $\mathrm{Fr}(E)$:
$$
\widehat{\,} : \mathfrak{X}(E)^{\ell} \to \mathfrak{X}(\mathrm{Fr}(E))^{\mathrm{GL}(q)}.
$$
It is defined by lifting the flows of the linear vector fields to $\mathrm{Fr}(E)$ and then taking the induced vector field. Explicitly,
\[\widehat{X}(p) = \frac{d}{dt}\vert_{t=0} \mathrm{Fl}^t_X \circ p.\]

The lifting of linear vector fields allows us to extend linear structures from a vector bundle to its frame bundle. This process transforms linear structures into invariant ones. For instance, an affine connection $\mathcal{H}$ on $E$ lifts to a principal connection $\widehat{\mathcal{H}}$ on the frame bundle of $E$. 

Our goal is to apply this lifting procedure to linear foliations on $E$, creating invariant foliations on $\mathrm{Fr}(E)$. However, we must note an important distinction. For regular foliations, the module of tangent vector fields and the leaf decomposition uniquely determine each other. This is not the case for singular foliations, where a module of tangent vector fields is in general not uniquely defined by the underlying foliation.

To address this, we adopt the following convention: in this section, a smooth foliation is defined as an involutive and locally finite module of vector fields. In particular, a \textbf{linear foliation} on $E$ will be identified with an involutive and locally finite module of \emph{linear} vector fields on $E$. By lifting the generating linear vector fields, we can lift a linear foliation $\cF$ on $E$ to an invariant foliation $\widehat{\cF}$ on $\mathrm{Fr}(E)$.

\begin{definition}[Metric-Preserving Linear Foliation]
Let $E$ be equipped with a fiberwise metric $\langle\,,\rangle$. A linear foliation $\mathcal{F}$ preserves $\langle\,,\rangle$ if the flow of the linear vector fields generating $\mathcal{F}$ are linear isometries between the fibers of $(E, \langle\,,\rangle)$.
\end{definition}

We now state our main theorem characterizing conditions for the smoothness of the closure of a linear foliation:

\begin{theorem}\label{thm:closure-lin-fol}    
Let $(E, \langle\,,\rangle, \mathcal{F}) \rightarrow B$ be a vector bundle equipped with a fiberwise metric and a linear foliation, where $\mathcal{F}$ preserves the metric. 
Suppose that both the lifted foliation $\widehat{\mathcal{F}}$ on the frame bundle $\mathrm{Fr}(E)$ and the induced foliation $\mathcal{F}_B$ on the base $B$ are regular.  
If $\mathcal{F}_B$ is a Riemannian foliation for some metric, and there exists a compatible $\mathcal{F}$-foliated affine connection $\mathcal{H}$, then the closure $\overline{\mathcal{F}}$ is a smooth foliation.
\end{theorem}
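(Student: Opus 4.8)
The plan is to lift the whole situation to a frame bundle, apply Corollary \ref{cor:principal-bundle} there, and then descend the resulting smooth closure back to $E$; the hypothesis that $\mathcal{F}$ preserves $\langle\,,\rangle$ is what makes the descent work, since it forces the relevant structure group to be \emph{compact}. Concretely, I would first lift $\mathcal{F}$ to the $\mathrm{GL}(q)$-invariant regular foliation $\widehat{\mathcal{F}}$ on $\mathrm{Fr}(E)$ and lift the affine connection $\mathcal{H}$ to the associated principal $\mathrm{GL}(q)$-connection $\widehat{\mathcal{H}}$. The foliation $\widehat{\mathcal{F}}$ projects onto $\mathcal{F}_B$, and I claim $\widehat{\mathcal{H}}$ is compatible with $\widehat{\mathcal{F}}$ and $\widehat{\mathcal{F}}$-foliated: a $\widehat{\mathcal{H}}$-horizontal lift of an $\mathcal{F}_B$-direction equals $\widehat{W}$, where $W$ is the $\mathcal{H}$-horizontal lift of that direction, and $W$ is a \emph{linear} vector field lying in the module of $\mathcal{F}$ (by compatibility of $\mathcal{H}$), resp. admitting an $\mathcal{F}$-foliated extension by a linear field (by the foliated hypothesis on $\mathcal{H}$, exactly as in Case 3 of the proof of Theorem \ref{thm:riem-fol}); since $\widehat{\;\cdot\;}$ is a Lie algebra isomorphism onto the invariant fields, $\widehat{W}$ inherits both properties.

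With these verifications, Corollary \ref{cor:principal-bundle} applies to the principal bundle $\mathrm{Fr}(E) \to B$ with $G = \mathrm{GL}(q)$: taking any left-invariant metric on $\mathrm{GL}(q)$ and a complete metric $\eta_B$ for which $\mathcal{F}_B$ is Riemannian (available in the leaf-closure setting of the intended application), hypotheses (i) and (ii) of the corollary hold, so $\widehat{\mathcal{F}}$ is a regular Riemannian foliation on the complete manifold $\mathrm{Fr}(E)$ and its leaf-closure foliation $\overline{\widehat{\mathcal{F}}}$ is a smooth, $\mathrm{GL}(q)$-invariant foliation.

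Next I would use metric-preservation to cut down to the orthonormal frame bundle. Because the generating linear vector fields of $\mathcal{F}$ have fiberwise-isometric flows, their lifts $\widehat{X}$ are tangent to the closed submanifold $O(E) \subset \mathrm{Fr}(E)$; hence every $\widehat{\mathcal{F}}$-leaf meeting $O(E)$ is contained in $O(E)$, and so is its closure. Thus $O(E)$ is a union of leaves of $\overline{\widehat{\mathcal{F}}}$, and the restriction $\overline{\widehat{\mathcal{F}}}|_{O(E)}$ is again a smooth foliation, now on the principal $O(q)$-bundle $O(E) \to B$, invariant under the \emph{compact} group $O(q)$. Finally, realizing $E = O(E) \times_{O(q)} \mathbb{R}^q$, I would descend this foliation: for each $v$ the evaluation $\mathrm{ev}_v \colon O(E) \to E$, $p \mapsto p(v)$, is proper (here compactness of $O(q)$ is essential), hence closed, and it intertwines the flows of $\widehat{X}$ and $X$; therefore $\overline{L_{\mathcal{F}}(p(v))} = \mathrm{ev}_v\big(\overline{L_{\widehat{\mathcal{F}}}(p)}\big)$, so the leaf closures of $\mathcal{F}$ are exactly the $\mathrm{ev}_v$-images of the smooth leaves of $\overline{\widehat{\mathcal{F}}}|_{O(E)}$. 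Averaging the tangent module of $\overline{\widehat{\mathcal{F}}}|_{O(E)}$ over $O(q)$ yields $O(q)$-invariant generators that descend to linear vector fields on $E$ forming an involutive, locally finite module with precisely these leaves, which exhibits $\overline{\mathcal{F}}$ as a smooth foliation.

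I expect the two delicate points to be, first, checking that the lift $\widehat{\mathcal{H}}$ is genuinely $\widehat{\mathcal{F}}$-foliated --- i.e. that the foliated extensions guaranteed for $\mathcal{H}$ can be taken among linear fields so that the Lie-algebra isomorphism $\widehat{\;\cdot\;}$ transports the bracket condition $[\,\cdot\,,\Gamma(\mathcal{F})]\subset\Gamma(\mathcal{F})$ upstairs --- and, second, the descent step, where one must confirm both that closures commute with the proper maps $\mathrm{ev}_v$ and that the averaged, descended generators really assemble into a smooth, locally finite tangent module (so that $\overline{\mathcal{F}}$ is a smooth foliation and not merely a partition by closed leaves). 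Securing the completeness of $\eta_B$, needed to invoke Molino's theorem inside Corollary \ref{cor:principal-bundle}, is a further point to pin down, though it holds automatically in the leaf-closure model to which the theorem is applied.
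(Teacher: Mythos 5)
Your proposal follows essentially the same route as the paper's proof: lift $\mathcal{H}$ to a compatible $\widehat{\mathcal{F}}$-foliated principal connection via the Lie algebra isomorphism $\widehat{\;\cdot\;}$, apply Corollary \ref{cor:principal-bundle} on $\mathrm{Fr}(E)$, use metric-preservation to see that the closed subbundle $\mathrm{O}(E)$ is $\overline{\widehat{\mathcal{F}}}$-saturated, and descend by properness of the associated-bundle construction (the paper pushes the product foliation $\overline{\widehat{\mathcal{F}}}\times\{\mathrm{pt}\}$ forward along the proper map $\mathrm{O}(E)\times\mathbb{R}^q\to E$, while you use the equivalent family of evaluation maps $\mathrm{ev}_v$ together with $O(q)$-averaging of generators, a minor variation of the same descent). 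The delicate points you flag --- transporting the foliatedness of $\mathcal{H}$ upstairs and the completeness of $\eta_B$ needed for Molino's theorem, which holds in the leaf-closure application --- are exactly the points the paper treats implicitly, so your argument is correct and matches the paper's.
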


\begin{proof}
Since we have an isomorphism of Lie algebras between linear and invariant vector fields, the affine connection $\mathcal{H}$ lifts to a compatible $\widehat{\mathcal{F}}$-foliated principal connection on $\mathrm{Fr}(E)$. By Corollary \ref{cor:principal-bundle}, the foliation $\widehat{\mathcal{F}}$ is Riemannian, and its closure is a smooth $GL(q)$-invariant foliation. Thus, $\overline{\widehat{\mathcal{F}}}$ is smooth.

Let $\mathrm{O}(E) \subset \mathrm{Fr}(E)$ be the reduction to the orthonormal frames of the fiberwise metric on $E$. 
By hypothesis, $\mathrm{O}(E)$ is $\widehat{\mathcal{F}}$-saturated, and since $\mathrm{O}(E)$ is a closed subbundle, it is also $\overline{\widehat{\mathcal{F}}}$-saturated. Hence, the restriction of $\overline{\widehat{\mathcal{F}}}$ to $\mathrm{O}(E)$ is a smooth foliation.

Let $\pi:\mathrm{O}(E)\times \mathbb{R}^q \to E$ be the projection recovering $E$ as an associated bundle.
Observe that, because the structural group of $\mathrm{O}(E)$ is $O(q)$, this map is proper. 
Consider on $\mathrm{O}(E)\times \mathbb{R}^q$ the product foliation $\overline{\widehat{\mathcal{F}}} \times \{\text{pt}\}$, which projects to the smooth foliation $\mathcal{F}$ on $E$.
Moreover, since $\pi$ is proper and $\pi_{*}(\widehat{\mathcal{F}} \times \{\text{pt}\}) = \mathcal{F}$, we have $\pi_{*}(\overline{\widehat{\mathcal{F}}}\times\{\text{pt}\}) = \overline{\mathcal{F}}$, which shows that $\overline{\mathcal{F}}$ is smooth. 
\end{proof}

\section{Closure of Linear Semi-local Models for SRFs}\label{sec:proof-MAR}

This section applies the results from previous sections to prove the smoothness of the closure of linearized singular Riemannian foliations. We then use this to provide an alternative proof of the Molino-Alexandrino-Radeschi Theorem.

\begin{definition}[Linearized Foliation]
Let $\mathcal{F}$ be a singular Riemannian foliation on a complete Riemannian manifold $M$, and $B = \overline{L}$ be the closure of a leaf. The linearization of $\mathcal{F}$ around $B$, denoted $\mathcal{F}^{\ell}$, is a linear foliation on the normal bundle $\nu B$. As a module of vector fields, $\mathcal{F}^{\ell}$ is the maximal locally finite module of linear vector fields obtained by linearizing the vector fields tangent to $\mathcal{F}$ along $B$.
\end{definition}

We now state our main theorem:

\begin{theorem}\label{thm:smooth-closure-linear}
Let $\mathcal{F}$ be a Riemannian foliation on a complete Riemannian manifold. 
Suppose $\mathcal{F}^{\ell}$ is the linearization of $\mathcal{F}$ around the closure of a leaf. Then its closure, $\overline{\mathcal{F}^{\ell}}$, is a smooth foliation.
\end{theorem}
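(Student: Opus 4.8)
The plan is to recognize the linearized foliation $\mathcal{F}^{\ell}$ as an instance of the abstract situation of Theorem \ref{thm:closure-lin-fol} and to verify its hypotheses one by one, so that smoothness of $\overline{\mathcal{F}^{\ell}}$ follows at once. The vector bundle is $E = \nu B \to B$ with $B = \overline{L}$, and the fiberwise metric $\langle\,,\rangle$ is the one that the Riemannian metric of $M$ induces on the normal bundle. Since $\mathcal{F}^{\ell}$ is generated by linearizing, along $B$, vector fields tangent to $\mathcal{F}$, and since the transverse holonomy of a Riemannian foliation acts by isometries, I would first check that the linearized flows act as linear isometries of the normal fibers; this gives that $\mathcal{F}^{\ell}$ preserves $\langle\,,\rangle$, which is the metric-preserving hypothesis of the theorem.

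Next I would supply the two regularity inputs and the Riemannian property of the base foliation by pointing to the literature. That the induced foliation $\mathcal{F}_B$ on $B$ is regular and Riemannian for a suitable $\eta_B$ is Molino's observation that a leaf closure is contained in a single stratum, on which the restriction of a singular Riemannian foliation is a regular Riemannian foliation; one takes $\eta_B$ to be the induced metric on this stratum. That the lift $\widehat{\mathcal{F}^{\ell}}$ to the frame bundle is regular is the content of \cite{aims}. With these facts in place, the single hypothesis of Theorem \ref{thm:closure-lin-fol} still to be established is the existence of a compatible $\mathcal{F}^{\ell}$-foliated affine connection $\mathcal{H}$ on $\nu B$.

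Constructing this connection is the crux of the argument and the step I expect to be the main obstacle, and it is precisely where the geometric results of \cite{ar2} are used: their construction of a homothety-invariant ``linear'' structure on $\nu B$ via the exponential map and a fiberwise linearization yields an affine connection adapted to $\mathcal{F}^{\ell}$. The delicate part is to read off from their construction the two precise requirements of Section \ref{sec:linear-foliations}, namely that the associated splitting $\sigma$ sends $\pi^*T\mathcal{F}_B$ into $T\mathcal{F}^{\ell}$ (compatibility) and that horizontal lifts of vectors normal to the leaves extend to $\mathcal{F}^{\ell}$-foliated vector fields. Once this is verified, all hypotheses of Theorem \ref{thm:closure-lin-fol} hold, and its conclusion — obtained by lifting $\mathcal{H}$ to a principal connection, restricting to $\mathrm{O}(\nu B)$, taking closures of the regular Riemannian foliation $\widehat{\mathcal{F}^{\ell}}$, and descending along the proper projection $\mathrm{O}(\nu B)\times\mathbb{R}^q \to \nu B$ as in Corollary \ref{cor:principal-bundle} — gives directly that $\overline{\mathcal{F}^{\ell}}$ is a smooth foliation.
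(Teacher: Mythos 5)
Your proposal is correct and follows essentially the same route as the paper: both proofs reduce the statement to Theorem \ref{thm:closure-lin-fol} by verifying its hypotheses for $E = \nu B$ — metric preservation of $\mathcal{F}^{\ell}$ (which the paper cites from \cite{mr} rather than re-deriving via holonomy), regularity of $\mathcal{F}_B$ and of the lifted foliation via \cite{aims}, and the compatible $\mathcal{F}^{\ell}$-foliated affine connection from \cite[Sec.~5]{ar2}. You correctly identify the connection from \cite{ar2} as the crucial external input, exactly as the paper does.
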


To prove this theorem, we need to establish several key properties of $\mathcal{F}^{\ell}$:

1. The projection of $\mathcal{F}^{\ell}$ along $\nu B \to B$ coincides with the regular foliation $\mathcal{F}_B$ obtained by restricting $\mathcal{F}$ to $B$ - details on the linearized foliation can be found in \cite[Sec. 2.5]{ar2}.

2. $\mathcal{F}^{\ell}$ lifts canonically to an invariant foliation $\widehat{\mathcal{F}}$ on the frame bundle of $\nu B$, see Section \ref{sec:linear-foliations}.

3. $\mathcal{F}^{\ell}$ preserves the fiberwise metric induced on $\nu B$, as shown in \cite[Prop. 14]{mr}.

4. $\widehat{\mathcal{F}}$ is a regular foliation on the orthonormal frame bundle and the full frame bundle, as demonstrated in \cite[Thm. 4.1]{aims}.

5. There exists a compatible $\mathcal{F}^{\ell}$-foliated affine connection $\mathcal{H}$ on $\nu B$, constructed in \cite[Sec. 5]{ar2}.

\begin{proof}
Given the properties listed above, we can directly apply Theorem \ref{thm:closure-lin-fol} to the linear semi-local model foliation $\mathcal{F}^{\ell}$. This immediately yields the result that $\overline{\mathcal{F}^{\ell}}$ is a smooth foliation.
\end{proof}

We can now use this result to provide an alternative proof of the Molino-Alexandrino-Radeschi Theorem:

\begin{theorem}[Molino-Alexandrino-Radeschi]
Let $\mathcal{F}$ be a singular Riemannian foliation on a complete Riemannian manifold. Then the closure $\overline{\mathcal{F}}$ is a smooth foliation.
\end{theorem}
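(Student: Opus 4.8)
The plan is to deduce the global statement from the linear semi-local model already settled in Theorem \ref{thm:smooth-closure-linear}, via the reduction of Alexandrino and Radeschi in \cite{ar2}. First I would observe that smoothness of a singular foliation is a purely local condition: to prove that $\overline{\mathcal{F}}$ is a smooth foliation it suffices to exhibit, near each point of $M$, enough smooth vector fields tangent to the leaf closures to span their tangent spaces at every point. By Molino's stratification theory (recalled in the introduction), the leaf closures already constitute a partition of $M$ into smooth immersed submanifolds confined to the strata, and the transnormality of this partition is automatic; the only missing ingredient is precisely the smoothness of the associated module of vector fields. Hence the problem localizes around a fixed leaf closure $B = \overline{L}$.

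Second, I would pass to the normal bundle $\nu B$. Using the normal exponential map one identifies a tubular neighborhood of $B$ in $M$ with a neighborhood of the zero section of $\nu B$, and under this identification the leaf-closure behavior of $\mathcal{F}$ along $B$ is governed by the linearized foliation $\mathcal{F}^{\ell}$. The crucial input, taken from \cite{ar2}, is that proving Molino's conjecture for $\mathcal{F}$ near $B$ reduces to establishing the smoothness of the closure $\overline{\mathcal{F}^{\ell}}$ of this linear semi-local model.

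Third, I would simply invoke Theorem \ref{thm:smooth-closure-linear}, which asserts exactly that $\overline{\mathcal{F}^{\ell}}$ is smooth. Combining this with the reduction above yields smoothness of $\overline{\mathcal{F}}$ in a neighborhood of $B$; since $B$ ranges over all leaf closures and smoothness is local, the foliation $\overline{\mathcal{F}}$ is smooth on all of $M$, which is the assertion of the theorem.

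The main obstacle is not in this final assembly, which is essentially a citation once Theorem \ref{thm:smooth-closure-linear} is available, but lies upstream: the honest verification that local smoothness of $\overline{\mathcal{F}^{\ell}}$ transfers to local smoothness of $\overline{\mathcal{F}}$ along $B$, i.e. that the linearization faithfully captures the closure behavior of $\mathcal{F}$. This is the geometric content encoded in \cite{ar2}. The contribution of the present paper is to have discharged the remaining hypothesis, the smoothness of $\overline{\mathcal{F}^{\ell}}$, by the direct connection-theoretic route of Theorem \ref{thm:closure-lin-fol} and Corollary \ref{cor:principal-bundle}, rather than through the orbit-like intermediary of \cite{ar1}.
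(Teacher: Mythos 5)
Your proposal is correct and follows the same overall route as the paper: everything is reduced to the smoothness of $\overline{\mathcal{F}^{\ell}}$, which is Theorem \ref{thm:smooth-closure-linear}. The one genuine difference is in the transfer step, where you invoke the reduction theorem of \cite{ar2} as a black box to pass from the linear semi-local model back to $\mathcal{F}$. The paper does not need that reduction in this direction: its proof is a two-line direct argument. Since the linearized vector fields are tangent to $\mathcal{F}$ (in the tubular neighborhood identified via the normal exponential map), one has $\mathcal{F}^{\ell} \subset \mathcal{F}$ and hence $\overline{\mathcal{F}^{\ell}} \subset \overline{\mathcal{F}}$; so any vector $v$ tangent to a leaf closure $B = \overline{L}$ extends, by smoothness of $\overline{\mathcal{F}^{\ell}}$, to a smooth vector field tangent to $\overline{\mathcal{F}^{\ell}}$ and therefore already tangent to $\overline{\mathcal{F}}$. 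Letting $B$ range over all leaf closures (these partition $M$ by Molino's theory) gives the extension property at every point, which is exactly the definition of smoothness of $\overline{\mathcal{F}}$. This also corrects your assessment of where the difficulty sits: in the direction needed here, the transfer from $\overline{\mathcal{F}^{\ell}}$ to $\overline{\mathcal{F}}$ is essentially trivial, and the genuinely hard inputs from \cite{ar2} are used earlier, namely the properties of the linearization and the construction of the compatible $\mathcal{F}^{\ell}$-foliated affine connection feeding into Theorem \ref{thm:closure-lin-fol}. Your version is logically sound but leans on heavier machinery than the paper's self-contained final assembly requires.
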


\begin{proof}
Let $v$ be a vector tangent to the leaf closure $\overline{L} = B$. By Theorem \ref{thm:smooth-closure-linear}, $v$ has an extension to a smooth vector field $X$ tangent to $\overline{\mathcal{F}^{\ell}}$. 

The linearization process implies that $\mathcal{F}^{\ell} \subset \mathcal{F}$, which leads to $\overline{\mathcal{F}^{\ell}} \subset \overline{\mathcal{F}}$. Hence, $X$ is tangent to $\overline{\mathcal{F}}$.

This shows that any vector tangent to a leaf closure of $\mathcal{F}$ can be extended to a smooth vector field tangent to $\overline{\mathcal{F}}$, proving that $\overline{\mathcal{F}}$ is a smooth foliation.
\end{proof}


\bibliographystyle{alpha}
\bibliography{references.bib}

\begin{thebibliography}{AIdMS22}

\bibitem[AIdMS22]{aims}
Marcos~M. Alexandrino, Marcelo~K. Inagaki, Mateus de~Melo, and Ivan Struchiner.
\newblock Lie groupoids and semi-local models of singular {R}iemannian
  foliations.
\newblock {\em Ann. Global Anal. Geom.}, 61(3):593--619, 2022.

\bibitem[AR17a]{ar2}
Marcos~M. Alexandrino and Marco Radeschi.
\newblock Closure of singular foliations: the proof of {M}olino's conjecture.
\newblock {\em Compos. Math.}, 153(12):2577--2590, 2017.

\bibitem[AR17b]{ar1}
Marcos~M. Alexandrino and Marco Radeschi.
\newblock Smoothness of isometric flows on orbit spaces and applications.
\newblock {\em Transform. Groups}, 22(1):1--27, 2017.

\bibitem[Mol77]{molino77}
Pierre Molino.
\newblock \'{E}tude des feuilletages transversalement complets et applications.
\newblock {\em Ann. Sci. \'{E}cole Norm. Sup. (4)}, 10(3):289--307, 1977.

\bibitem[Mol88]{molino88}
Pierre Molino.
\newblock {\em Riemannian foliations}, volume~73 of {\em Progress in
  Mathematics}.
\newblock Birkh\"{a}user Boston, Inc., Boston, MA, 1988.
\newblock Translated from the French by Grant Cairns, With appendices by
  Cairns, Y. Carri\`ere, \'{E}. Ghys, E. Salem and V. Sergiescu.

\bibitem[MR19]{mr}
Ricardo A.~E. Mendes and Marco Radeschi.
\newblock A slice theorem for singular {Riemannian} foliations, with
  applications.
\newblock {\em Trans. Am. Math. Soc.}, 371(7):4931--4949, 2019.

\end{thebibliography}

\end{document}